\numberwithin{equation}{section}
\newtheorem{theorem}{Theorem}[section]
\newtheorem{proposition}[theorem]{Proposition}
\newtheorem{lemma}[theorem]{Lemma}
\newtheorem{corollary}[theorem]{Corollary}
\newtheorem{Definition}[theorem]{Definition}
\newenvironment{definition}{\begin{Definition}\rm}{\end{Definition}}
\newtheorem{Remark}[theorem]{Remark}
\newenvironment{remark}{\begin{Remark}\rm}{\end{Remark}}
\newtheorem{RHproblem}[theorem]{RH problem}
\newtheorem{Example}[theorem]{Example}
\newcommand{\C}{\mathbb{C}}
\newcommand{\N}{\mathbb{N}}
\newcommand{\R}{\mathbb{R}}
\def\det{\mathop{\mathrm{det}}\nolimits}
\renewcommand{\bar}{\overline}
\renewcommand{\tilde}{\widetilde}
\renewcommand{\hat}{\widehat}
\begin{document}
\title{Random Polynomials in Several Complex Variables}
\author{T. Bayraktar, T. Bloom and N. Levenberg}
\date{}

\maketitle

\begin{abstract}

We generalize some previous results on random polynomials in several complex 
variables. A standard setting is to consider random polynomials $H_n(z):=\sum_{j=1}^{m_n} a_jp_j(z)$ that are linear combinations of basis polynomials $\{p_j\}$ with i.i.d. complex random variable coefficients $\{a_j\}$ where $\{p_j\}$ form an orthonormal basis for a Bernstein-Markov measure on a compact set $K\subset \C^d$. Here $m_n$ is the dimension of $\mathcal P_n$, the holomorphic polynomials of degree at most $n$ in $\C^d$. We consider more general bases $\{p_j\}$, which include, e.g., higher-dimensional generalizations of Fekete polynomials. Moreover we allow $H_n(z):=\sum_{j=1}^{m_n} a_{nj}p_{nj}(z)$; i.e., we have an {\it array} of basis polynomials $\{p_{nj}\}$ and random coefficients $\{a_{nj}\}$. This always occurs in a weighted situation. We prove results on convergence in probability and on almost sure convergence of $\frac{1}{n}\log |H_n|$ in $L^1_{loc}(\C^d)$ to the (weighted) extremal plurisubharmonic function for $K$. We aim for weakest possible sufficient conditions on the random coefficients to guarantee convergence. 
\end{abstract}

\bigskip

\noindent Keywords: random polynomials, asymptotically Chebyshev polynomials, asymptotically (weighted) Bernstein-Markov measures

\bigskip

\section{Introduction} We will consider random polynomials in $\C^d$ of degree at most $n$ of the form 
\begin{equation}\label{probhb}H_n(z):=\sum_{j=1}^{m_n} a_{nj}p_{nj}(z)\end{equation}
where $\{a_{nj}\}_{j=1,...,m_n; \ n=1,2,...}$ are an array of i.i.d. random variables; $\{p_{nj}\}_{j=1,...,m_n}$ form a basis for $\mathcal P_n$, the holomorphic polynomials of degree at most $n$ in $\C^d$; and $m_n=$dim$(\mathcal P_n$). The random variables $a_{nj}$ are defined on a probability space $(\Omega, \mathcal H, \mathbb P)$. This occurs naturally in a weighted setting; c.f., subsection 2.2. A special case is when 
\begin{equation}\label{probh}H_n(z):=\sum_{j=1}^{m_n} a_jp_j(z)\end{equation}
where the $\{a_j\}$ are a sequence of i.i.d. complex random variables with a basis $\{p_1,...,p_{m_n}\}$ for $\mathcal P_n$. We always assume the random variables are nondegenerate; i.e., their distribution is not supported at a single point. In the situation where one starts with $K\subset \C^d$ compact and $\mu$ a probability measure on $K$ such that $||p||_{L^2(\mu)}=0$ for a polynomial $p$ implies $p\equiv 0$, a well-studied situation is to let $\{p_j\}_{j=1,...,m_n}$ in (\ref{probh}) be an orthonormal basis in $L^2(\mu)$ for $\mathcal P_n$ constructed via Gram-Schmidt applied to a monomial basis of $\mathcal P_n$. In this setting, convergence of the sequence of (deterministic) plurisubharmonic (psh) functions $\{\frac{1}{2n}\log B_n(z)\}$, where $B_n(z):=\sum_{j=1}^{m_n}|p_j(z)|^2$, to the extremal psh function $V_K$ associated to $K$ plays a major role.

We aim to generalize some of the results occurring in \cite{ba}, \cite{Bay} and \cite{blrp} on asymptotics of normalized zero measures and currents associated to $H_n$ in two ways.  First, we allow our polynomial bases to be more general; they need not come from orthonormal polynomials associated to a single measure $\mu$ on $K$. For example, we allow a basis $\{p_{nj}\}_{j=1,...,m_n}$ for $\mathcal P_n$ for $n=1,2,...$ to come from a sequence of {\it asymptotically (weighted) Bernstein-Markov measures} $\{\mu_n\}$ for $K$ (with weight $Q$). In the sequence case, we allow $\{p_j\}$ which are {\it asymptotically Chebyshev} for $K$. Secondly, we aim for the weakest conditions on the random variables $a_j$ or $a_{nj}$ in order to have the appropriate convergence as, e.g., in \cite{DD} and \cite{BD}. 

For the classical Kac ensemble $H_n(z)=\sum_{j=0}^n a_jz^j=a_n\prod_{k=1}^n(z-z_{nk})$ in one variable, Ibragimov and Zaporozhets \cite{IZ} showed that the condition $\Bbb{E}(\log(1+|a_j))<\infty$ is a necessary and sufficient condition for the sequence $\mu_{H_n}:=\frac{1}{n}\sum_{k=1}^n \delta_{z_{kn}}$ to converge weakly almost surely to normalized arclength measure $\frac{1}{2\pi}d\theta$ on the unit circle $T$. This is the potential-theoretic equilibrium measure $\mu_T$ of $T$; and the monomials $\{z^j\}$ are an orthonormal basis for $L^2$ of the same measure. Kabluchko and Zaporozhets \cite{KZ} proved some results for convergence in probability of $\mu_{H_n}\to \mu_T$ weakly. Pritsker and Ramachandran \cite{PR} gave some almost sure convergence results for special polynomial bases on certain compacta in $\C$. Bloom and Dauvergne \cite{BD} showed that for a regular compact set $K\subset \C$ with equilibrium measure $\mu_K$, given a Bernstein-Markov measure $\mu$ on $K$ and an orthonormal basis of polynomials $\{p_j\}$ for $L^2(\mu)$, if $\mathbb P(|a_j|>e^{|z|})=o(1/|z|)$, then $\mu_{H_n}\to \mu_K$ weakly in probability. Dauvergne generalized these univariate results in \cite{DD}: for his class of asymptotically minimal polynomials $\{p_j\}$ for $K$, which includes orthonormal polynomials for a Bernstein-Markov measure $\mu$ on $K$ and essentially all ``classical'' sets of polynomials (Chebyshev, Fekete, Fejer, etc.), the condition $\Bbb{E}(\log(1+|a_j))<\infty$ is a necessary and sufficient condition for the sequence $\mu_{H_n}\to \mu_K$ weakly almost surely while the condition $\mathbb P(|a_j|>e^{|z|})=o(1/|z|)$ is a necessary and sufficient condition for the sequence $\mu_{H_n}\to \mu_K$ weakly in probability.

We work in $\C^d$ with $d>1$. Bloom and Shiffman \cite{BS}, building upon earlier work of Shiffman and Zelditch \cite{SZ}, proved some results for Gaussian random polynomials where the basis polynomials are orthonormal polynomials for a Bernstein-Markov measure $\mu$ on $K\subset \C^d$. In this orthonormal polynomial setting, as well as a weighted one, the authors (\cite{ba}, \cite{Bay} and \cite{blrp}) have given sufficient conditions on $\{a_j\}$ or $\{a_{nj}\}$ in order to get certain almost sure convergence results for normalized zero currents associated to random $\{H_n\}$ in (\ref{probhb}) or in (\ref{probh}). In the next subsection, we define a notion of asymptotically Chebyshev polynomials for $K$ from \cite{Bloom} which mimics the $L^{\infty}-$version of Dauvergne's asymptotically minimal polynomials in \cite{DD}. For certain bases, we prove a determinstic result on the corresponding sequence of deterministic psh functions $\{\frac{1}{2n}\log B_n\}$. In the second part of section 2 we prove local uniform convergence of $\frac{1}{2n}\log B_n$ to $V_K$ (or $V_{K,Q}$) in the setting of asymptotically (weighted) Bernstein-Markov measures. In sections 3 and 4 we prove convergence in probability and almost sure convergence results for our random $\{H_n\}$ using bases from these settings under essentially weakest possible conditions on the arrays of i.i.d. random coefficients $\{a_{nj}\}$. Here we utilize some probabilistic arguments from \cite{BD}.
\vskip4pt

\noindent{\bf Acknowledgements}: We wish to thank Vu Duc Viet for pointing out an error in a previous version of this paper and Bela Nagy for the example in Remark \ref{2.3}.

\section{Deterministic results} 

\subsection{Asymptotically Chebyshev polynomials}

As in the introduction we let $m_n= \ $dim$(\mathcal P_n$). Let 
$$L(\C^d):=\{u \in PSH(\C^d): u(z)\leq \log |z|+0(1), \ |z| \to \infty\},$$
the Lelong class of psh functions of minimal growth. Given a nonpluripolar compact subset $K\subset \C^d$,  let 
\begin{equation} \label{vk} V_{K}(z):= \sup \{u(z): u\in L(\C^d), \ u\leq 0 \ \hbox{on} \ K\} \end{equation}
and $V_K^*(z):=\limsup_{\zeta\to z}V_K(\zeta)$. Then $V_K^*\in L(\C^d)$ and it is known that
$$V_{K}(z)=\sup \{\frac{1}{deg(p)}\log |p(z)|: p\in \cup_n \mathcal P_n, \ ||p||_K\leq 1\}.$$
In particular, $V_K=V_{\hat K}$ where $\hat K:=\{z\in \C^d: |p(z)|\leq ||p||_K, \ p\in \cup_n \mathcal P_n\}$. Moreover, given a {\it Bernstein-Markov} (BM) measure $\mu$ on $K$, i.e., $||p_n||_K\leq M_n ||p_n||_{L^2(\mu)}$ for $p_n \in \mathcal P_n$ with $M_n^{1/n}\to 1$, it is straightforward to see that for the sequence of Bergman functions $B_n(z):=\sum_{j=1}^{m_n}|p_j(z)|^2$, where $\{p_j\}_{j=1,...,m_n}$ is an orthonormal basis of $\mathcal P_n$ in $L^2(\mu)$, we have $\frac{1}{2n}\log B_n(z)\to V_K$ pointwise on $\C^d$. Moreover the convergence is locally uniform if $K$ is regular; i.e., if $V_K$ is continuous (cf. Proposition 3.1, \cite{blrp}). We generalize this in Propositions \ref{locunif} and \ref{locunifb} in subsection 2.2. For more on (BM) measures, see \cite{BMsur}.

For simplicity, throughout this subsection we assume our compact sets are regular. We fix an ordering $\prec$ on $\N^d$ so that $\alpha \prec \beta$ if $|\alpha|\leq |\beta|$. Writing 
$$e_i(z)=z^{\alpha}=z^{\alpha(i)}:=z_1^{\alpha_1}\cdots z_d^{\alpha_d}$$ and $|\alpha|:=\alpha_1+\cdots +\alpha_d$, let 
$$P(\alpha)=P(\alpha(i)):=\{e_i(z)+\sum_{j<i}c_je_j(z): c_j \in \C \}=\{z^{\alpha(i)}+\sum_{\beta \prec \alpha(i)} c_{\beta}z^{\beta}\}.$$
Given a compact set $K\subset \C^d$ we define the Chebyshev constants
$$\tau_{\alpha(i)}(K):= \inf \{||p||_K:p\in P(\alpha(i))\}.$$
Note this generalizes the univariate notion; if $d=1$ and $K\subset \C$, the Chebyshev polynomial of degree $j$ for $K$ is the monic polynomial $t_j(z)=z^j+...$ of degree $j$ of minimal supremum norm on $K$ among all such polynomials. 
Following \cite{Z}, we use these to define {\it directional Chebyshev constants}. We have $\frac{\alpha(i)}{|\alpha(i)|}\in \Sigma$, the standard
$(d-1)-$simplex  in $\R^d$:
$$\Sigma = \{\theta =(\theta_1,...,\theta_d)\in \R^d: \sum_{j=1}^d\theta_j=1, \ \theta_j\geq 0, \ j=1,...,d\}.$$
Let $\Sigma^o :=\{\theta \in \Sigma:  \ \theta_j > 0, \ j=1,...,d\}$. 
Zaharjuta \cite{Z} showed that for $\theta \in \Sigma^o$, the limit
\begin{equation}\label{zah} \tau(K,\theta):=\lim_{|\alpha(i)|\to \infty, \ \frac{\alpha(i)}{|\alpha(i)|}\to \theta} \tau_{\alpha(i)}(K)^{1/|\alpha(i)|} \ \hbox{exists};\end{equation}
and we call $\tau(K,\theta)$ the directional Chebyshev constant of $K$ in the direction $\theta$.

Following \cite{Bloom}, given $\theta\in \Sigma^o$, we call a sequence of polynomials $\{q_k\}_{k\in Y_{\theta}}$, $Y_{\theta}\subset \N$, where $q_k(z)=z^{\beta(k)}+\sum_{\beta \prec \beta(k)}c_{\beta}z^{\beta}$ {\it $\theta-$asymptotically Chebyshev for $K$} if 
$$\lim_{k\to \infty, \ k\in Y_{\theta}} deg(q_k)=\infty, \ \lim_{k\to \infty, \ k\in Y_{\theta}}\beta(k)/|\beta(k)|= \theta,$$ and 
$$\lim_{k\to \infty, \ k\in Y_{\theta}} ||q_k||_K^{1/|\beta(k)|}=\tau(K,\theta).$$
If we need to specify the multiindex and degree we will write $q_j$ as $q_{n,\alpha}$  where $|\alpha|=n=deg(q_{n,\alpha})$. Here the ordering $\prec$ then gives us the ordering of the sequence $\{q_j\}_j$. 

\begin{definition} \label{good} We call the sequence $\{q_j\}_j$  {\it asymptotically Chebyshev for $K$} if for each  $\theta\in \Sigma^o$, we can find a subsequence $Y_{\theta}\subset \N$ so that $\{q_k\}_{k\in Y_{\theta}}$ is $\theta-$asymptotically Chebyshev for $K$. If the sequence has the additional property that for each $\theta\in \Sigma^o$, {\it every} sequence
of $\alpha \in \N^d$ with $\lim_{|\alpha|\to \infty} \frac{\alpha}{|\alpha|}=\theta$ satisfies $\lim ||q_{n,\alpha}||_K^{1/|\alpha|}=
\tau (K,\theta)$, then we call $\{q_j\}_j$  {\it $Z-$asymptotically Chebyshev for $K$}.
\end{definition}

The $Z$ is in honor of Zaharjuta. We will always assume that there is one $q_j=q_{n,\alpha}$ for each multiindex $\alpha\in \N^d$ with $q_{n,\alpha}(z)=c_{\alpha} z^{\alpha} + ...$ where $c_{\alpha}\not = 0$ so that we have a polynomial basis. Note that if $d=1$, any asymptotically Chebyshev sequence for $K$ is $Z-$asymptotically Chebyshev.

Bloom (Theorem 4.2 of \cite{Bloom}) showed that for $\{q_j\}_j$ asymptotically Chebyshev for $K$
\begin{equation}\label{clike}[\limsup_{j\to \infty} \frac{1}{deg(q_j)} \log \frac{|q_j(z)|}{||q_j||_K}]^* = V_K(z) \ \hbox{for} \ z\not \in \hat K. \end{equation}
Note that since $\alpha \prec \beta$ if $|\alpha|\leq |\beta|$ we have $deg(q_j)\leq deg(q_{j+1})$ in (\ref{clike}).

\begin{remark} \label{bmcomp} It is clear that if $\{q_j\}=\{q_{n,\alpha}\}$ is ($Z-$) asymptotically Chebyshev for $K$ and $\{c_{n,\alpha}\}$ are positive constants such that for all $\epsilon>0$ there exists $n_0=n_0(\epsilon)$ so that for $n\geq n_0$
$$\exp(-\epsilon n)\leq c_{n,\alpha}\leq \exp(\epsilon n),$$
then $\{\tilde q_j\}=\{\tilde q_{n,\alpha}\}$ is ($Z-$) asymptotically Chebyshev for $K$ where $\tilde q_{n,\alpha}:=c_{n,\alpha} q_{n,\alpha}$. In particular, given a (BM) measure $\mu$ for $K$ and any ($Z-$) asymptotically Chebyshev sequence $\{q_j\}$ for $K$ where $q_j\in P(\alpha(j))$, the sequence $\{\tilde q_j\}$, where $\tilde q_j :=q_j/ ||q_j||_{L^2(\mu)}$, is ($Z-$) asymptotically Chebyshev for $K$. 

\end{remark}

We give some examples of $Z-$asymptotically Chebyshev sequences $\{q_j\}=\{q_{n,\alpha}\}$ for a regular compact set $K\subset \C^d$.

\begin{enumerate} 

\item {\it Chebyshev polynomials}: $\{t_j\}=\{t_{n,\alpha}\}$ where $t_j\in P(\alpha(j))$ with $||t_j||_K=\tau_{\alpha(j)}(K):= \inf \{||p||_K:p\in P(\alpha(j))\}$.

\item {\it  $L^2(\mu)-$minimal polynomials for a (BM) $\mu$}: $\{q_j\}=\{q_{n,\alpha}\}$ where, for $\mu$ a (BM) measure for $K$, $q_j\in P(\alpha(j))$ is the $L^2(\mu)-$minimal (orthogonal) polynomial in $P(\alpha(j))$ for $K$; i.e., $||q_j||_{L^2(\mu)}=\inf \{||p||_{L^2(\mu)}:p\in P(\alpha(j))\}$. That this sequence is $Z-$asymptotically Chebyshev for $K$ follows from Zaharjuta's result (\ref{zah}), the (BM) property, and Remark \ref{bmcomp}.

\end{enumerate}

Another class of examples can be given following \cite{Bloom}. Given a triangular array of points $\{\zeta_{sj}\}_{j=1,...,s; \ s=1,2,...}\subset K$ such that $VDM(\zeta_{s1},...,\zeta_{ss})\not = 0$ for all $s$ where
$$VDM(\zeta_{s1},...,\zeta_{ss}):=\det [e_i(\zeta_j)]_{i,j=1,...,s}  $$
$$:= \det
\left[
\begin{array}{ccccc}
 e_1(\zeta_{s1}) &e_1(\zeta_{s2}) &\ldots  &e_1(\zeta_{ss})\\
  \vdots  & \vdots & \ddots  & \vdots \\
e_{s}(\zeta_{s1}) &e_{s}(\zeta_{s2}) &\ldots  &e_{s}(\zeta_{ss})
\end{array}
\right],$$
we define, for $\alpha=\alpha(s)$, 
$$q_{\alpha}(z):=\frac{VDM(\zeta_{s1},...,\zeta_{ss},z)}{VDM(\zeta_{s1},...,\zeta_{ss})}\in P(\alpha(s+1)).$$
We can write
$$q_{\alpha}(z)=z^{\alpha(s+1)}- L_{\alpha(s)}(z^{\alpha(s+1)})$$
where for a function $f$ on $K$, $L_{\alpha(s)}(f)$ is the Lagrange interpolating polynomial for $f$ in the linear span of $\{z^{\alpha(j)}: j=1,...,s\}$ at the points $\zeta_{s1},...,\zeta_{ss}$; i.e., 
$$L_{\alpha(s)}(f)(z):=\sum_{j=1}^s f(\zeta_{sj})l_{sj}(z)$$
where
$$l_{sj}(z):=\frac{VDM(\zeta_{s1},...,\zeta_{s,j-1},z,\zeta_{s,j+1},...,\zeta_{ss},z)}{VDM(\zeta_{s1},...,\zeta_{ss})}$$
are the fundamental Lagrange interpolating polynomials for $\zeta_{s1},...,\zeta_{ss}$ (note $l_{sj}(\zeta_{sk}) =\delta_{jk}$). In particular, for any $p\in P(\alpha(s))$ we have $p=L_{\alpha(s)}(p)$. The Lebesgue constants associated to $\{\zeta_{sj}\}_{j=1,...,s; \ s=1,2,...}$ are the the sequence of numbers $\{\Lambda_{\alpha}\}_{\alpha \in \N^d}$ where $\Lambda_{\alpha}$ is the norm of the projection operator $f\to L_{\alpha}(f)$ for $f\in C(K)$ using the supremum norm $|| \cdot ||_K$; i.e., 
$$\Lambda_{\alpha}:=\sup \{||L_{\alpha}(f)||_K: f\in C(K), \ ||f||_K\leq 1\}.$$
Then 
$$\Lambda_{\alpha(s)}=\sup_{z\in K} \sum_{j=1}^s |l_{sj}(z)|.$$

If we take, for each $s=1,2,...$ points $\zeta_{s1},...,\zeta_{ss}\in K$ to maximize $|VDM(t_1,...,t_s)|$ over all choices of $t_1,...,t_s\in K$, then $|\Lambda_{\alpha(s)}|\leq s$. We call such arrays {\it Fekete arrays} for $K$. Here are further examples of $Z-$asymptotically Chebyshev polynomials for $K$.

\begin{enumerate}

\item[3.] {\it Fekete polynomials}: $\{q_j\}$ where $q_j(z)=z^{\alpha(j)}- L_{\alpha(j-1)}(z^{\alpha(j)})$ are polynomials associated to an array of Fekete points for $K$. This is a special case of our next example: 

\item[4.] {\it Polynomials associated to arrays in $K$ with Lebesgue constants of subexponential growth}: $\{q_j\}$ where $q_j(z)=z^{\alpha(j)}- L_{\alpha(j-1)}(z^{\alpha(j)})$ are polynomials associated to an array $\{\zeta_{sj}\}_{j=1,...,s; \ s=1,2,...}\subset K$ with $\lim_{|\alpha|\to \infty} \Lambda_{\alpha}^{1/|\alpha|}=1$.

\end{enumerate}

\noindent That these are examples of $Z-$asymptotically Chebyshev polynomials for $K$ follows from Corollary 4.4 of \cite{Bloom}: for such $\{q_j\}$, since $q_j, t_j \in P(\alpha(j))$, we have $$q_j(z)=z^{\alpha(j)}- L_{\alpha(j-1)}(z^{\alpha(j)})= t_j(z)- L_{\alpha(j-1)}(t_j(z))$$ so that
$$||t_j||_K\leq ||q_j||_K=||t_j(z)- L_{\alpha(j-1)}(t_j(z))||_K \leq (1+\Lambda_{\alpha(j)})||t_j||_K.$$

Given an asymptotically Chebyshev sequence $\{q_j\}$ for $K$, let $p_j(z):= |q_j(z)|/||q_j||_K$. We will also use the notation $p_{n,\alpha}$ for $p_j$ if need be, and we set $q_1(z)=p_1(z)\equiv 1$. In analogy with the case of an orthonormal basis associated to a Bernstein-Markov measure, we define, for $n=1,2,...$, the function 
\begin{equation} \label{bn} B_n(z):=\sum_{j=1}^{m_n} |p_j(z)|^2. \end{equation}
It is easy to see that 
\begin{equation} \label{upbound} \limsup_{n\to \infty} \frac{1}{2n}\log B_n(z)\leq V_K(z) \ \hbox{for} \ z\in \C^d  \end{equation}
and
\begin{equation} \label{onK} \lim_{n\to \infty} \frac{1}{2n}\log B_n(z)=0 \ \hbox{for} \ z\in \hat K. \end{equation}
Indeed, since $||p_j||_K=1$, by definition of $V_K$, $|p_j(z)|\leq e^{deg(p_j)V_K(z)}$. Thus
$$B_n(z)\leq \sum_{j=1}^{m_n} e^{2deg(p_j)V_K(z)}\leq m_ne^{2nV_K(z)}$$
which gives (\ref{upbound}) since $m_n=0(n^d)$. Note that this estimate also shows that the family $\{\frac{1}{2n}\log B_n\}$ is locally uniformly bounded above on $\C^d$. Now since $q_1(z)\equiv 1$ we have $B_n(z)\geq 1$ so 
$$\liminf_{n\to \infty} \frac{1}{2n}\log B_n(z)\geq 0 \ \hbox{for} \ z\in \C^d$$
which, together with (\ref{upbound}), proves (\ref{onK}). We use these estimates to prove our next result.

\begin{proposition} \label{bnasymp} Let $\{q_j\}_j$ be $Z-$asymptotically Chebyshev for $K$. Then for the sequence $\{B_n\}$ in (\ref{bn}) we have the following: given any subsequence $Y\subset \N$, there is a further subsequence $Y_0\subset Y$ and a countable dense set of points $\{w_r\}$ in $\C^d$ such that 
$$\lim_{n\to \infty, \ n\in Y_0} \frac{1}{2n}\log B_n(w_r)=V_K(w_r), \ r=1,2,...$$

\end{proposition}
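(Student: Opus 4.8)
The plan is to prove the (a priori stronger) deterministic fact that
$$\limsup_{n\to\infty}\frac{1}{2n}\log B_n(z)=V_K(z)\qquad\text{for every }z\in\C^d\setminus P,$$
with $P$ pluripolar, and then to bootstrap this to the subsequential statement in the proposition by extracting an $L^1_{\mathrm{loc}}$--convergent subsequence and identifying its limit.

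For the displayed equality, observe that $B_{\deg q_j}(z)\ge|p_j(z)|^2$, so $\frac{1}{2\deg q_j}\log B_{\deg q_j}(z)\ge\frac{1}{\deg q_j}\log|p_j(z)|$; since every positive integer equals $\deg q_j$ for some $j$, passing to the limsup over $j$ gives $\limsup_n\frac{1}{2n}\log B_n(z)\ge u(z)$ for every $z$, where $u(z):=\limsup_{j\to\infty}\frac{1}{\deg q_j}\log|p_j(z)|$. Each $\frac{1}{\deg q_j}\log|p_j|$ is psh and, since $\|p_j\|_K=1$, bounded above by $V_K$; hence the negligible set $\{u<u^*\}$ is pluripolar. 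By Bloom's identity (\ref{clike}), $u^*=V_K$ on $\C^d\setminus\hat K$, while on $\hat K$ we already know $\frac{1}{2n}\log B_n\to 0=V_K$ by (\ref{onK}). Combined with (\ref{upbound}) this proves the displayed equality off a pluripolar $P$. I would also keep at hand the uniform bound $\frac{1}{2n}\log B_n\le V_K+\delta_n$ with $\delta_n:=\frac{\log m_n}{2n}\to 0$, from the estimate preceding (\ref{upbound}).

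Now fix $Y\subseteq\N$. The functions $\{\frac{1}{2n}\log B_n\}_{n\in Y}$ are psh, nonnegative (as $B_n\ge|p_1|^2=1$) and $\le V_K+\delta_n$, hence locally uniformly bounded; by the compactness theorem for psh functions I extract $Y_1\subseteq Y$ along which $\frac{1}{2n}\log B_n\to g$ in $L^1_{\mathrm{loc}}(\C^d)$ with $g$ psh, and then a further $Y_0\subseteq Y_1$ along which convergence also holds pointwise almost everywhere (exhaust $\C^d$ by balls and diagonalize). From $\frac{1}{2n}\log B_n\le V_K+\delta_n$ one gets $g\le V_K$, and the argument hinges on the reverse inequality $g\ge V_K$. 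For this I would use the extremal description $B_n(z)=\sup\{|p(z)|^2:\deg p\le n,\ \sum_j|c_j|^2\|q_j\|_K^2\le 1\}$ (for $p=\sum_jc_jq_j$, this being the reproducing kernel on the diagonal for the inner product making $\{q_j/\|q_j\|_K\}$ orthonormal): feeding in polynomials $P_n$ with $\deg P_n\le n$, $\|P_n\|_K\le 1$ and $\frac1n\log|P_n|\to V_K$ locally uniformly (available since $K$ is regular), together with the fact that an asymptotically Chebyshev family stays well conditioned in $\|\cdot\|_K$ up to subexponential factors, so that $\big(\sum_j|c_j^{(n)}|^2\|q_j\|_K^2\big)^{1/2}\le e^{o(n)}\|P_n\|_K$ for $P_n=\sum_jc_j^{(n)}q_j$, one obtains $B_n(z)\ge e^{2n(V_K(z)-o(1))}$, hence $g\ge V_K$ and so $g=V_K$. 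Consequently $\frac{1}{2n}\log B_n(z)\to V_K(z)$ for almost every $z$ along $Y_0$, and one takes $\{w_r\}$ to be any countable dense subset of the full--measure set where this holds.

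The main obstacle is the inequality $g\ge V_K$ -- equivalently, a subexponential lower bound on $B_n$, equivalently the well-conditioning of the normalized asymptotically Chebyshev basis in $L^\infty(K)$; this is the one place where the full strength of the hypothesis (beyond Bloom's identity (\ref{clike})) is used, and it should be established from the definition of $\tau(K,\theta)$ and Bernstein--Walsh type estimates on the coefficients of the $q_{n,\alpha}$. Once that is in place, the remaining ingredients -- compactness of locally bounded families of psh functions, passage from $L^1_{\mathrm{loc}}$ to almost everywhere convergence along a subsequence, and the choice of the dense set $\{w_r\}$ -- are routine.
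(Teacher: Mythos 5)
Your first paragraph is essentially the paper's starting point (the trivial bound $B_{\deg q_j}\ge |p_j|^2$ plus Bloom's identity (\ref{clike}) and (\ref{upbound})), but the proof breaks at exactly the step you flag as the ``main obstacle,'' and it cannot be repaired in the form you propose. The well-conditioning claim $\bigl(\sum_j|c_j^{(n)}|^2\|q_j\|_K^2\bigr)^{1/2}\le e^{o(n)}\|P_n\|_K$ is not a deferred lemma: it is false for general asymptotically Chebyshev families. Take $d=1$, $K=T$ the unit circle (so $\tau(K)=1$), and $q_n(z)=z^n+e^{\sqrt n}z^{n-1}$, $q_0\equiv 1$. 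Then $\|q_n\|_T=1+e^{\sqrt n}$, so $\|q_n\|_T^{1/n}\to 1$ and the family is asymptotically Chebyshev; yet expanding the extremal polynomial $P_n(z)=z^n$ (with $\|P_n\|_T=1$) in this basis gives, by the telescoping $z^k=q_k-e^{\sqrt k}z^{k-1}$, a coefficient of $q_0$ equal to $\pm\exp\bigl(\sum_{k=1}^n\sqrt k\bigr)\sim\exp(\tfrac23 n^{3/2})$, so the left-hand side is superexponential in $n$. Hence the reproducing-kernel/extremal description of $B_n$ cannot deliver $g\ge V_K$ this way. There is a secondary gap as well: your displayed limsup identity is for the \emph{full} sequence, which gives no information about $\limsup_{n\in Y_1}$ along the subsequence you extract by compactness; to use it for $Y_1$ you must observe that $\{q_{n,\alpha}\}_{n\in Y_1,\,|\alpha|=n}$ is again asymptotically Chebyshev and re-invoke Bloom's theorem for that subfamily, which is precisely what the paper does.

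The paper's proof needs neither the extremal description of $B_n$ nor any conditioning estimate. It applies Bloom's Theorem 4.2 to the subfamily indexed by $n\in Y$, obtaining that for each $z\in\C^d\setminus\hat K$ outside a pluripolar set there is a $z$-dependent subsequence of pairs $(n_s,\alpha_s)$, $n_s\in Y$, with $\frac{1}{n_s}\log|p_{n_s,\alpha_s}(z)|\to V_K(z)$; combined with $\frac{1}{2n}\log B_n(z)\ge\frac1n\log|p_{n,\alpha}(z)|$ and (\ref{upbound}) this forces $\frac{1}{2n_s}\log B_{n_s}(z)\to V_K(z)$. Because the good subsequence varies with $z$, the countable dense set is then built by covering $\C^d\setminus\hat K$ with countably many (nonpluripolar) balls, choosing one good point $w_r$ in each, extracting nested subsequences $Y\supset Y_1\supset Y_2\supset\cdots$ realizing the limit at $w_1,w_2,\dots$, and diagonalizing; on $\hat K$ one simply uses (\ref{onK}). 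If you wish to keep your compactness architecture, the correct way to close it is not well-conditioning but the standard fact that if psh functions $u_n$, locally uniformly bounded above, converge in $L^1_{loc}$ to a psh $g$, then $g=(\limsup_n u_n)^*$ and $g=\limsup_n u_n$ off a pluripolar set; applying this along $Y_1$ together with Bloom's identity for the subfamily indexed by $Y_1$ identifies $g=V_K$.
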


\begin{proof} Fix a subsequence $Y\subset \N$. Equation (\ref{onK}) shows the pointwise convergence of the full sequence $\{\frac{1}{2n}\log B_n\}$ to $0$ on $\hat K$ so we only need to find a subsequence $Y_0\subset Y$ and an appropriate countable dense set of points $\{w_r\}$ in $\C^d\setminus \hat K$. For the subsequence $Y$, from Definition \ref{good} the collection of polynomials 
$\{ q_{n,\alpha}\}_{n\in Y, \ |\alpha|=n}$ is asymptotically Chebyshev for $K$ so by Theorem 4.2 of \cite{Bloom} for $z\in \C^d \setminus \hat K$ we have
$$[\limsup_{n \to \infty, \ n\in Y, \ |\alpha|=n} \frac{1}{n} \log \frac{|q_{n,\alpha}(z)|}{||q_{n,\alpha}||_K}]^* = V_K(z).$$ 
Hence
\begin{equation} \label{five} \limsup_{n \to \infty, \ n\in Y, \ |\alpha|=n} \frac{1}{n} \log \frac{|q_{n,\alpha}(z)|}{||q_{n,\alpha}||_K} =\limsup_{n \to \infty, \ n\in Y, \ |\alpha|=n} \frac{1}{n} \log |p_{n,\alpha}(z)|= V_K(z)\end{equation}
on $\C^d \setminus \hat K$ except perhaps a pluripolar set.

Let $\{D_r\}$ be a countable collection of open balls in $\C^d \setminus \hat K$ such that choosing one point in each $D_r$ yields a countable dense set of points in $\C^d \setminus \hat K$. Note that each $D_r$ is nonpluripolar. Starting with $D_1$, choose $w_1\in D_1$ so that (\ref{five}) holds at $w_1$. Then for some subsequence of pairs $(n_1,\alpha_1), (n_2,\alpha_2),...$ with $n_j\in Y$ we have
$$\lim_{s \to \infty} \frac{1}{n_s} \log |p_{n_s,\alpha_s}(w_1)|= V_K(w_1)$$
where we can assume $n_j \leq n_{j+1}$. Let $Y_1=\{n_1,n_2,...\}\subset Y$. We claim that 
\begin{equation} \label{six}\lim_{n\to \infty, \ n\in Y_1} \frac{1}{2n}\log B_n(w_1)=V_K(w_1).\end{equation}
Here (\ref{six}) follows from (\ref{upbound}), the previous equality, and the elementary fact that for any $n\in \N$ and any $z\in \C^d$,
$$\frac{1}{2n}\log B_{n}(z)\geq \frac{1}{n} \log |p_{n,\alpha}(z)|.$$

We now repeat this argument with a point $w_2\in D_2$ to get a subsequence $Y_2 \subset Y_1$ with
$$\lim_{n\to \infty, \ n\in Y_2} \frac{1}{2n}\log B_n(w_2)=V_K(w_2).$$
Continuing, we get a countable family of nested subsequences $Y\supset Y_1 \supset Y_2 ...$ and points $w_r \in D_r$ with
$$\lim_{n\to \infty, \ n\in Y_r} \frac{1}{2n}\log B_n(w_r)=V_K(w_r).$$
Then using a diagonalization argument, we get a sequence $Y_0$ which is a subsequence of each $Y_r$ (except for the first $r$ terms) so that $Y_0 \subset Y$ and for each $r=1,2,...$ we have
$$\lim_{n\to \infty, \ n\in Y_0} \frac{1}{2n}\log B_n(w_r)=V_K(w_r).$$

\end{proof}

\begin{remark} \label{2.3} Suppose $d=1$. The condition that $\{q_j\}_{j=0,1,...}$ (where $j=deg(q_j)$) be asymptotically Chebyshev for $K$ is related to the $L^{\infty}-$version in \cite{DD} of asymptotically minimal. Moreover, the conclusion in Proposition \ref{bnasymp} cannot be strengthened to attain pointwise convergence of the full sequence $\{\frac{1}{2n}\log B_n\}$ to $V_K$ on $\C \setminus \hat K$. Indeed, fix $a\in \C \setminus \hat K$ and let $q_j(z)=(z-a)t_{j-1}(z)$ where $t_{j-1}(z)=z^{j-1}+...$ is the Chebyshev polynomial of degree $j-1$ for $K$. Setting $p_0(z)=q_0(z)\equiv 1$ we have $B_n(a)=\sum_{j=0}^n |p_j(a)|^2=1$ so that 
$$\lim_{n\to \infty} \frac{1}{2n}\log B_n(a)=0 < V_K(a).$$
We thank Bela Nagy for this example. In fact, it is not clear if Proposition \ref{bnasymp} can be strengthened to obtain a countable dense set of points in $\C^d$ such that the full sequence $\{\frac{1}{2n}\log B_{n}\}_{n \in \N}$ converges to $V_K$ at each point. See section 4 for the relevance of this question.

\end{remark}

As a corollary of Proposition \ref{bnasymp}, we will get the conclusion that the full sequence $\{ \frac{1}{2n}\log B_n\}_{n\in \N}$ converges in $L^1_{loc}(\C^d)$ to $V_K$. We utilize the following result on subharmonic functions, stated for $\C=\R^2$ as Theorem 2.4 in \cite{BD}. A probabilistic version of this result, Proposition \ref{keypropB}, will be crucial in proving probabilistic results in sections 3 and 4.

\begin{proposition} \label{keyprop} Let $\{u_n\}$ be a sequence of subharmonic functions on $D\subset \R^m, \ m\geq 2$ which are locally uniformly bounded above on $D$ and let $u$ be subharmonic and continuous in $D$. Suppose that 
$$[\limsup_{n\to \infty}u_n(w)]^*\leq u(w), \ w\in D$$
and there is a countable dense set of points $\{w_r\}$ in $D$ such that 
$$\lim_{n\to \infty}u_n(w_r)=u(w_r), \ r=1,2,...$$ 
Then $u_n\to u$ in $L^1_{loc}(D)$.

\end{proposition}

\begin{corollary} \label{bnloc} Let $\{q_j\}_j$ be $Z-$asymptotically Chebyshev for $K$. Then $\frac{1}{2n}\log B_n \to V_K$ in $L^1_{loc}(\C^d)$.

\end{corollary}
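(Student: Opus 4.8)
The plan is to deduce Corollary \ref{bnloc} from Proposition \ref{bnasymp} together with Proposition \ref{2.4}, the key point being that the subsequential convergence on a countable dense set produced by Proposition \ref{bnasymp} can be bootstrapped into $L^1_{loc}$ convergence of the \emph{full} sequence via a standard ``subsequence of a subsequence'' argument. Set $u_n := \frac{1}{2n}\log B_n$ and $u := V_K$; recall that $V_K$ is psh (hence subharmonic as a function on $\R^{2d}$) and, by our blanket regularity assumption on $K$, continuous on $\C^d$.

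First I would check that the hypotheses of Proposition \ref{2.4} are available in a ``conditional'' form. By the estimate following \eqref{upbound}, the family $\{u_n\}$ is locally uniformly bounded above on $\C^d=\R^{2d}$, and each $u_n=\frac{1}{2n}\log B_n$ is psh (a normalized log of a sum of squares of moduli of holomorphic functions), hence subharmonic; and by \eqref{upbound} itself we have $[\limsup_{n\to\infty}u_n(w)]^*\le V_K(w)$ for all $w$. Thus the only hypothesis of Proposition \ref{2.4} that is not automatic is the existence of a countable dense set on which $u_n\to u$ pointwise along the \emph{full} sequence. Proposition \ref{bnasymp} only gives this along a subsequence; so Proposition \ref{2.4} as stated applies not to $\{u_n\}$ directly but to any subsequence.

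Then I would run the subsequence argument: to prove $u_n\to V_K$ in $L^1_{loc}(\C^d)$ it suffices to show that every subsequence $Y\subset\N$ has a further subsequence $Y_0\subset Y$ with $u_n\to V_K$ in $L^1_{loc}$ along $Y_0$. Given $Y$, apply Proposition \ref{bnasymp} to obtain $Y_0\subset Y$ and a countable dense set $\{w_r\}\subset\C^d$ with $\lim_{n\to\infty,\,n\in Y_0}u_n(w_r)=V_K(w_r)$ for every $r$. Now apply Proposition \ref{2.4} to the sequence $\{u_n\}_{n\in Y_0}$: it is locally uniformly bounded above, consists of subharmonic functions, satisfies $[\limsup u_n]^*\le V_K$, and converges to $V_K$ on the countable dense set $\{w_r\}$; hence $u_n\to V_K$ in $L^1_{loc}(\C^d)$ along $Y_0$. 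Since the full sequence $\{u_n\}$ is bounded in $L^1_{loc}$ (being locally uniformly bounded above and $\ge 0$, so locally uniformly bounded, by \eqref{onK}'s companion inequality $u_n\ge 0$), the metric-space characterization of convergence — $a_n\to a$ iff every subsequence has a further subsequence converging to $a$ — upgrades this to convergence of the full sequence $u_n\to V_K$ in $L^1_{loc}(\C^d)$.

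I do not expect a serious obstacle here; the only mild subtlety is making explicit that Proposition \ref{2.4} is being invoked along a subsequence rather than the whole sequence, and that the ``every subsequence has a convergent sub-subsequence with the common limit $V_K$'' criterion legitimately yields full-sequence convergence — this is valid precisely because the candidate limit $V_K$ is the \emph{same} for all choices of subsequence, which is exactly what Proposition \ref{bnasymp} guarantees (the dense set of points changes, but the limiting values $V_K(w_r)$ do not). One should also note $u_n\ge 0$ everywhere (from $q_1\equiv 1$, as used to prove \eqref{onK}), which together with the local uniform upper bound gives the local $L^1$ boundedness needed to legitimately speak of subsequential $L^1_{loc}$ limits.
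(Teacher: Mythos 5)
Your proposal is correct and follows essentially the same route as the paper: reduce to showing every subsequence has a further subsequence along which $\frac{1}{2n}\log B_n\to V_K$ in $L^1_{loc}$, obtain that sub-subsequence and a countable dense set from Proposition \ref{bnasymp}, verify the upper envelope bound from (\ref{upbound}), and apply Proposition \ref{2.4}. The extra remarks you add (subharmonicity of the $u_n$, the lower bound $u_n\ge 0$, and the metrizability of $L^1_{loc}$ underlying the subsequence criterion) are harmless elaborations of steps the paper leaves implicit.
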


\begin{proof} We show that for any subsequence $Y_1\subset \N$, there exists a subsequence $Y_2 \subset Y_1$ such that 
\begin{equation}\label{imp1}\lim_{n\to \infty, \ n\in Y_2} \frac{1}{2n}\log B_n=V_K \ \hbox{in} \ L^1_{loc}(\C^d).\end{equation}
For this it suffices to observe that for any subsequence $Y\subset \N$, we have 
\begin{equation}\label{imp2} [\limsup_{n\to \infty, \ n\in Y} \frac{1}{2n}\log B_n(z)]^*\leq V_K(z), \ z\in \C^d \end{equation}
which follows immediately from (\ref{upbound}). For then, by Proposition \ref{bnasymp}, given a subsequence $Y_1\subset \N$, we get a subsequence $Y_2\subset Y_1$ and a countable dense set $\{w_r\}$ in $\C^d$ such that 
$$\lim_{n\to \infty, \ n\in Y_2} \frac{1}{2n}\log B_n(w_r)=V_K(w_r), \ r=1,2,...$$
Using (\ref{imp2}) for $Y=Y_2$, we can apply Proposition \ref{keyprop} with $D=\C^d$ and $u=V_K$ to conclude (\ref{imp1}) (recall that $\{\frac{1}{2n}\log B_n\}$ is locally uniformly bounded above on $\C^d$). 

\end{proof}

\begin{remark} Considering families $\{q_j\}$ with one $q_j=q_{n,\alpha}$ for each multiindex $\alpha \in \N^d$ with $q_{n,\alpha}(z)=c_{\alpha} z^{\alpha} + ...$ where $c_{\alpha}\not = 0$, another class of examples of asymptotically Chebyshev families for $K$ which includes a multidimensional generalization of Leja polynomials is given in \cite{Bloom} (see Corollary 4.5 and equation (4.28)). We do not know if such families are $Z-$asymptotically Chebyshev for $K$.

\end{remark}

\subsection{Local uniform convergence of $\{\frac{1}{2n}\log B_n$\}} In the  case where $q_j\in P(\alpha(j))$ is the $L^2(\mu)-$minimal (orthogonal) polynomial in $P(\alpha(j))$ for a regular compact set $K \subset \C^d$ where $\mu$ is (BM), we actually have $\lim_{n\to \infty} \frac{1}{2n}\log B_n(z)=V_K(z)$ locally uniformly in $\C^d$ where $p_j=q_j/||q_j||_K$. Indeed, in this subsection, we analyze more general conditions under which we have such a conclusion. We call a compact set $K\subset \C^d$ {\it locally regular} if for any $z_0 \in K$, $V_{K\cap \bar{B(z_0,r)}}$ is continuous at $z_0$ for $r>0$ where $B(z_0,r):=\{z:|z-z_0|< r\}$. We also let $Q$ be a continuous, real-valued function on $K$ (a continuous {\it weight}). The weighted extremal function for $K,Q$ is defined as
$$V_{K,Q}(z):=\sup\{u(z): u\in L(\C^d), \ u\leq Q \ \hbox{on} \ K\}.$$
It is known that 
\begin{enumerate}
\item for $K$ locally regular and $Q$ continuous, $V_{K,Q}$ is continuous on $\C^d$ (cf., \cite{NQD}); and
\item for $K$ compact,
$$V_{K,Q}(z)= \sup \{\frac{1}{deg(p)}\log |p(z)|: p\in \cup_n \mathcal P_n, \ ||pe^{-deg(p)\cdot Q}||_K\leq 1\}.$$ 

\end{enumerate}
Given $K$ and $Q$, for each $n=1,2,...$ define
$$ \phi_n(z):=\sup \{ |p(z)|: p\in  \mathcal P_n, \ \|pe^{-nQ}\|_K\leq 1\}.$$
If $K$ is locally regular and $Q$ is continuous, then
\begin{equation}\label{phin} \lim_{n\to \infty} \frac{1}{n}\log \phi_n(z) = V_{K,Q}(z)\end{equation} 
locally uniformly on $\C^d$ \cite{Bloom2}. The conclusion remains true if $Q=0$ (the unweighted case) for $K$ (globally) regular, i.e., when $V_K$ is continuous. For $n$ fixed we consider a basis $\{p_{nj}\}_{j=1,...,m_n}$ for $\mathcal P_n$ normalized so that $||p_{nj}e^{-nQ}||_K=1$. Then $|p_{nj}(z)|\leq \phi_n(z)$ so that
\begin{equation}\label{rhsin} B_n(z):=\sum_{j=1}^{m_n}|p_{nj}(z)|^2\leq m_n\cdot [\phi_n(z)]^2.\end{equation}
Now assume that $\{p_{nj}\}_{j=1,...,m_n}$ is an orthogonal basis in $L^2(e^{-2nQ}\mu_n)$ for $\mathcal P_n$ where $\mu_n$ is a probability measure on $K$. Let $M_n$ be the smallest constant such that 
$$||qe^{-nQ}||_K\leq M_n ||q||_{L^2(e^{-2nQ}\mu_n)}=M_n||qe^{-nQ}||_{L^2(\mu_n)}   \ \hbox{for all} \ q \in \mathcal P_n.$$
Note then $||p_{nj}||^2_{L^2(e^{-2nQ}\mu_n)}\geq 1/M_n, \ j=1,...,m_n$. Let $p$ be a polynomial of degree at most $n$ with $||pe^{-nQ}||_K\leq 1$. Writing $p(z)=\sum_{j=1}^{m_n}a_j \frac{p_{nj}(z)}{||p_{nj}||_{L^2(e^{-2nQ}\mu_n)}}$, 
$$|p(z)|^2\leq \sum_{j=1}^{m_n}|a_j|^2 \cdot \sum_{j=1}^{m_n}\frac{|p_{nj}(z)|^2}{||p_{nj}||^2_{L^2(e^{-2nQ}\mu_n)}}= ||p||_{L^2(e^{-2nQ}\mu_n)}^2\cdot \sum_{j=1}^{m_n}\frac{|p_{nj}(z)|^2}{||p_{nj}||^2_{L^2(e^{-2nQ}\mu_n)}}$$
$$ \leq ||pe^{-nQ}||_K^2\cdot M_n \sum_{j=1}^{m_n}|p_{nj}(z)|^2\leq M_n B_n(z).$$ 
Since $\phi_n(z)=\sup \{ |p(z)|: p\in  \mathcal P_n, \ \|pe^{-nQ}\|_K\leq 1\}$, taking the supremum over all such $p$ gives 
\begin{equation}\label{lhsin} [\phi_n(z)]^2\leq M_n B_n(z).\end{equation} 

From the above discussion, using (\ref{phin}), (\ref{rhsin}) and (\ref{lhsin}), we have the following result.

\begin{proposition}\label{locunif} Let $K\subset \C^d$ be a locally regular compact set, let $Q$ be a continuous, real-valued function on $K$, and let $\{\mu_n\}$ be a sequence of probability measures on $K$ such that, for $n=1,2,...$, we have
\begin{equation}\label{asymbm} ||qe^{-nQ}||_K\leq M_n||qe^{-nQ}||_{L^2(\mu_n)} \ \hbox{for all} \ q \in \mathcal P_n \ \hbox{with} \ \lim_{n\to \infty} M_n^{1/n}=1.\end{equation} 
Let $B_n(z):=\sum_{j=1}^{m_n}|p_{nj}(z)|^2$ where $\{p_{n1},...,p_{nm_n}\}$ is an orthogonal basis in $L^2(e^{-2nQ}\mu_n)$ for $\mathcal P_n$ with $||p_{nj}e^{-nQ}||_K=1$. Then 
$$\lim_{n\to \infty} \frac{1}{2n}\log B_n(z)=V_{K,Q} \ \hbox{locally uniformly on} \ \C^d.$$

\end{proposition}

We will call a sequence of probability measures $\{\mu_n\}$ satisfying (\ref{asymbm}) {\it asymptotically weighted Bernstein-Markov} (BM) for $K$ and $Q$. For the unweighted setting ($Q=0$) we have the corresponding result.

\begin{proposition}\label{locunifb} Let $K\subset \C^d$ be a regular compact set and let $\{\mu_n\}$ be a sequence of probability measures on $K$ such that, for $n=1,2,...$, we have
\begin{equation}\label{asymbmb} ||q||_K\leq M_n ||q||_{L^2(\mu_n)} \ \hbox{for all} \ q \in \mathcal P_n \ \hbox{with} \ \lim_{n\to \infty} M_n^{1/n}=1.\end{equation} 
Let $B_n(z):=\sum_{j=1}^{m_n}|p_{nj}(z)|^2$ where $\{p_{n1},...,p_{nm_n}\}$ is an orthogonal basis in $L^2(\mu_n)$ for $\mathcal P_n$ with $||p_{nj}||_K=1$. Then 
$$\lim_{n\to \infty} \frac{1}{2n}\log B_n(z)=V_K \ \hbox{locally uniformly on} \ \C^d.$$

\end{proposition}

We call $\{\mu_n\}$ satisfying (\ref{asymbmb}) {\it asymptotically Bernstein-Markov} (BM) for $K$. Here are some examples of such measures and corresponding basis polynomials $\{p_{n1},...,p_{nm_n}\}$.

\begin{enumerate} 

\item Let $K$ be regular and for each $n$, let $\zeta_{n1},...,\zeta_{nm_n}\in K$ be a set of {\it Fekete points of order $n$ for $K$}, i.e., 
\begin{equation}\label{vdm} |VDM(\zeta_{n1},...,\zeta_{nm_n})|=\max_{\zeta_1,...,\zeta_{m_n}\in K}|VDM(\zeta_1,...,\zeta_{m_n})|. \end{equation}
Define $\mu_n:=\frac{1}{m_n}\sum_{j=1}^{m_n}\delta_{\zeta_{nj}}$. Then the collection of fundamental Lagrange interpolating polynomials $l_{m_n1},...,l_{m_nm_n}\in \mathcal P_n$ for $\zeta_{n1},...,\zeta_{nm_n}$, i.e., $l_{m_nj}(\zeta_{nk})=\delta_{jk}$, satisfy $||l_{m_nj}||_K=1$ from (\ref{vdm}), and, by construction, these polynomials are an orthogonal basis in $L^2(\mu_n)$ for $\mathcal P_n$. Note that $||l_{m_nj}||_{L^2(\mu_n)}=\frac{1}{\sqrt m_n}$. For any $q\in \mathcal P_n$, 
$$q(z)=\sum_{j=1}^{m_n} q(\zeta_{nj})l_{m_nj}(z)=\sum_{j=1}^{m_n} \frac{1}{\sqrt m_n}q(\zeta_{nj})\sqrt{m_n} l_{m_nj}(z)$$ so that 
$$||q||_K \leq ||q||_{L^2(\mu_n)}\cdot m_n \sqrt{m_n}.$$
Thus, from Proposition \ref{locunifb}, setting $B_n(z):=\sum_{j=1}^{m_n}|l_{m_nj}(z)|^2$, we have $\frac{1}{2n}\log B_n \to V_K$ locally uniformly on $\C^d$.

\item Generalizing the previous example, given $K$ regular, for each $n$, let $\zeta_{n1},...,\zeta_{nm_n}\in K$ be a set of points in $K$ such that the corresponding sequence of Lebesgue constants 
$$\Lambda_{\alpha(m_n)}=\sup_{z\in K} \sum_{j=1}^{m_n}|l_{m_nj}(z)| \ \hbox{satisfy} \ \lim_{n\to \infty} \Lambda_{\alpha(m_n)}^{1/n}=1.$$
For Fekete points, $\Lambda_{\alpha(m_n)} \leq m_n$. A modification of the above argument for Fekete points shows that in this setting, $\mu_n:=\frac{1}{m_n}\sum_{j=1}^{m_n}\delta_{\zeta_{nj}}$ are asymptotically (BM) for $K$ and hence that $\frac{1}{2n}\log B_n \to V_K$ locally uniformly on $\C^d$ where $B_n(z):=\sum_{j=1}^{m_n}|l_{m_nj}(z)|^2$.

\item Let $\{\mathcal A_n\}_{n=1,...}$ be a {\it weakly admissible mesh} for $K$ where $\mathcal A_n= \{a_{n1},...,a_{ns_n}\}\in K$. This means that $s_n^{1/n}\to 1$ and $||p_n||_K\leq c_n ||p_n||_{\mathcal A_n}$ where $c_n^{1/n}\to 1$. Then the canonical measures $\mu_n:=\frac{1}{s_n}\sum_{j=1}^{s_n}\delta_{ns_j}$ are asymptotically (BM) for $K$. Here, since generally $s_n > > m_n$, one constructs the orthogonal polynomials $\{p_{n1},...,p_{nm_n}\}$ in $L^2(\mu_n)$ for $\mathcal P_n$ with $||p_{nj}||_K=1$ in a standard fashion, e.g., Gram-Schmidt. 
\end{enumerate}

\noindent This last example is due to F. Piazzon who introduced the notion of asymptotically Bernstein-Markov measures for $K$ \cite{F}. For more on admissible meshes, c.f., \cite{CL}.

We remark that Zaharjuta \cite{Z} used (\ref{zah}) to prove that the limit
$$\lim_{n\to \infty} |VDM(\zeta_{n1},...,\zeta_{nm_n})|^{1/l_n}=:d(K) \ \hbox{(transfinite diameter of $K$)}$$
exists for any nonpluripolar compact set $K$ where $\zeta_{n1},...,\zeta_{nm_n}\in K$ are Fekete points of order $n$ for $K$ and $l_n=\sum_{j=1}^{m_n}deg(e_j)$. In fact, he showed that $$\log d(K)=\frac{1}{|\Sigma|} \int_{\Sigma^o} \log \tau(K,\theta) d\theta$$
where $|\Sigma|$ denotes the $d-1$ (real) dimensional measure of $\Sigma$.


\section{Convergence in probability} We first consider random polynomials in $\C^d$ of degree at most $n$ of the form (\ref{probhb}); i.e.,
$$H_n(z):=\sum_{j=1}^{m_n} a_{nj}p_{nj}(z)$$
where $\{a_{nj}\}_{j=1,...,m_n; \ n=1,2,...}$ are an {\it array} of i.i.d. random variables and $\{p_{nj}\}_{j=1,...,m_n}$ are a basis for $\mathcal P_n$. We work in the corresponding probability space $(\Omega, \mathcal H,{\mathbb P})$. Sometimes to emphasize the random nature of our objects, we will include the variable $\omega\in \Omega$; e.g., we may write $a_j(\omega), \ H_n(z,\omega)$, etc. We assume $p_{n1}\equiv 1$ and we define, for $n=1,2,...$, the function 
\begin{equation} \label{bn2} B_n(z):=\sum_{j=1}^{m_n} |p_{nj}(z)|^2. \end{equation}

We recall the definition of the {\it concentration function} $\mathcal Q$ of a complex-valued random variable $X$: $\mathcal Q(X,r):= \sup_{z\in \C}{\mathbb P}(X \in B(z,r))$. This satisfies the following elementary properties: 
\begin{enumerate}
\item for any such $X$ and $a\in \C \setminus \{0\}$, $\mathcal Q(aX, r)=\mathcal Q(X, r/|a|)$; 
\item if $X$ are $Y$ are independent, $\mathcal Q(X+Y,r)\leq \mathcal Q(X,r)$. 
\end{enumerate}
If, e.g., $X$ has a bounded density (with respect to Lebesgue measure), then $\mathcal Q(X)\leq Cr^2$ for some $C>0$. 
We state our convergence in probability results in this array setting in $\C^d, \ d\geq 1$.

\begin{theorem} \label{convprob} Let $\{a_{nj}\}$ be an array of i.i.d. random variables. We assume that
\begin{itemize}
\item[(i)] $\mathbb P(|a_{nj}|>e^{|z|})=o(1/|z|^d)$ and  
\item[(ii)] $\mathcal Q(a_{nj};r)\to 0 \ \hbox{as} \ r\to 0$.
\end{itemize}  
Let $K\subset \C^d$ be compact and regular. Suppose that that $||p_{nj}||_K\leq 1, \ j=1,...,m_n; n=1,2,...$ and with $\{B_n\}$ in (\ref{bn2}), given any subsequence $Y\subset \N$, there is a further subsequence $Y_0\subset Y$ and a countable dense set of points $\{w_r\}$ in $\C^d$ such that 
$$\lim_{n\to \infty, \ n\in Y_0} \frac{1}{2n}\log B_n(w_r)=V_K(w_r), \ r=1,2,...$$
Then, in probability,
$$\frac{1}{n}\log |H_n|\to V_K \ \hbox{in} \ L^1_{loc}(\C^d).$$
\end{theorem}

\noindent Here bases $\{p_{nj}\}$ constructed from asymptotically (BM) measures as in Proposition \ref{locunifb} provide an example where we have the much stronger property that $\frac{1}{2n}\log B_n\to V_K$ locally uniformly on $\C^d$. 

\begin{theorem} \label{convprobQ} Let $\{a_{nj}\}$ be an array of i.i.d. random variables. We assume that
\begin{itemize}
\item[(i)] $\mathbb P(|a_{nj}|>e^{|z|})=o(1/|z|^d)$ and  
\item[(ii)] $\mathcal Q(a_{nj};r)\to 0 \ \hbox{as} \ r\to 0$.
\end{itemize}  
Let $K\subset \C^d$ be compact and locally regular and let $Q$ be a continuous, real-valued function on $K$. Suppose that that $||p_{nj}e^{-nQ}||_K\leq 1, \ j=1,...,m_n; n=1,2,...$ and with $\{B_n\}$ in (\ref{bn2}), given any subsequence $Y\subset \N$, there is a further subsequence $Y_0\subset Y$ and a countable dense set of points $\{w_r\}$ in $\C^d$ such that 
$$\lim_{n\to \infty, \ n\in Y_0} \frac{1}{2n}\log B_n(w_r)=V_{K,Q}(w_r), \ r=1,2,...$$
Then, in probability,
$$\frac{1}{n}\log |H_n|\to V_{K,Q} \ \hbox{in} \ L^1_{loc}(\C^d).$$
\end{theorem}

\noindent Here bases $\{p_{nj}\}$ constructed from asymptotically weighted (BM) measures as in Proposition \ref{locunif} provide an example. For $d=1$ and the case where $\{p_{nj}\}$ are an orthonormal basis in $L^2(\mu)$ for $\mathcal P_n$ for a single weighted (BM) measure $\mu$ (thus $\frac{1}{2n}\log B_n\to V_K$ locally uniformly on $\C$), Dauvergne \cite{Du} obtains the conclusion without the hypothesis on the concentration function.

\begin{remark} \label{issue} As a corollary, for the normalized zero currents $$dd^c (\frac{1}{n}\log |H_n|):=\frac{i}{\pi} \partial \bar \partial (\frac{1}{n}\log |H_n|) $$ (here, $dd^c \log |H_n|$ is the current of integration on the algebraic hypersurface $\{H_n=0\}$), we conclude that every subsequence $Y\subset \N$ has a further subsequence $Y_0\subset Y$ such that almost surely
$$\lim_{n\to \infty, \ n\in Y_0} dd^c (\frac{1}{n}\log |H_n|)\to dd^c V_K \ (\hbox{or} \ dd^cV_{K,Q}) \ \hbox{as currents}.$$
\end{remark}

The proofs of Theorems \ref{convprob} and \ref{convprobQ} are identical; we use the notation and give the proof of the former. We utilize a multivariate version of Lemma 3.1 of \cite{BD}, this time stated for random polynomials as in (\ref{probhb}). We observe that $m_n=0(n^d)$; hence we have the condition $\mathbb P(|a_j|>e^{|z|})=o(1/|z|^d)$ (instead of $o(1/|z|)$ when $d=1$ in \cite{BD}).

\begin{lemma} \label{3.1} Let $\{a_{nj}\}$ be i.i.d. complex random variables satisfying $\mathbb P(|a_{nj}|>e^{|z|})=o(1/|z|^d)$, and let $H_n(z)=\sum_{j=1}^{m_n} a_{nj}p_{nj}(z)$ be random polynomials where $||p_{nj}||_K\leq 1$. For any sequence $Y\subset \N$ there is a further subsequence $Y' \subset Y$ such that almost surely, the family $\{\frac{1}{n}\log |H_n|\}_{n\in Y'}$ is locally uniformly bounded above and for all $z\in \C^d$ 
\begin{equation} \label{3.1c} \limsup_{n\to \infty, \ n\in Y'} \frac{1}{n}\log |H_n(z)| \leq V_K(z).\end{equation}
\end{lemma}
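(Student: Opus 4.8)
\textbf{Proof plan for Lemma \ref{3.1}.}

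The plan is to bound $\frac{1}{n}\log|H_n(z)|$ from above by combining (i) a deterministic bound on $B_n$ (namely $\frac{1}{2n}\log B_n \le V_K + o(1)$ locally uniformly, from (\ref{upbound}) and the remark after it) with (ii) a probabilistic bound controlling the size of the largest coefficient $\max_{j\le m_n}|a_j|$. First I would write the pointwise estimate
\[
|H_n(z)| \le \sum_{j=1}^{m_n}|a_j|\,|p_j(z)| \le \Big(\max_{1\le j\le m_n}|a_j|\Big)\sum_{j=1}^{m_n}|p_j(z)| \le \Big(\max_{1\le j\le m_n}|a_j|\Big)\sqrt{m_n}\,\sqrt{B_n(z)},
\]
using Cauchy--Schwarz for the last step. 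Taking $\frac{1}{n}\log$ and using $m_n = O(n^d)$ together with $\frac{1}{2n}\log B_n(z) \le V_K(z) + \varepsilon_n$ (with $\varepsilon_n\to 0$ locally uniformly, since $\{\frac{1}{2n}\log B_n\}$ is locally uniformly bounded above and its limsup is $\le V_K$), this reduces everything to showing that along a suitable subsequence $Y_0$, almost surely $\frac{1}{n}\log\big(\max_{j\le m_n}|a_j|\big)\to 0$.

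The heart of the matter is thus the probabilistic claim: under $\mathbb P(|a_j|>e^{|z|}) = o(1/|z|^d)$, for any $Y\subset\N$ there is $Y_0\subset Y$ with $\frac{1}{n}\log\max_{j\le m_n}|a_j|\to 0$ a.s. along $Y_0$. For any fixed $\delta>0$, a union bound gives $\mathbb P(\max_{j\le m_n}|a_j| > e^{\delta n}) \le m_n\,\mathbb P(|a_1|>e^{\delta n}) = O(n^d)\cdot o(1/(\delta n)^d) = o(1)$ as $n\to\infty$. So along any subsequence $Y$ this probability tends to $0$; hence I can extract a subsequence $Y_0\subset Y$ along which, for a sequence $\delta_k\downarrow 0$, the probabilities $\mathbb P(\max_{j\le m_n}|a_j| > e^{\delta_k n})$ are summable (for each $k$, keep only those $n$ large enough that the probability is $<2^{-n}$, and diagonalize over $k$). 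By Borel--Cantelli, almost surely for each $k$ one has $\max_{j\le m_n}|a_j| \le e^{\delta_k n}$ for all large $n\in Y_0$, i.e. $\limsup_{n\in Y_0}\frac{1}{n}\log\max_{j\le m_n}|a_j| \le \delta_k$; letting $k\to\infty$ over the countable intersection of full-measure events gives the limit $0$ a.s. Combining with the deterministic bound on the same event yields both the local uniform boundedness above of $\{\frac1n\log|H_n|\}_{n\in Y_0}$ and the inequality (\ref{3.1c}).

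The main obstacle — really the only delicate point — is the bookkeeping in the extraction: one must pass to a single subsequence $Y_0\subset Y$ on which all the events $\{\max_{j\le m_n}|a_j| > e^{\delta_k n}\}_{n\in Y_0}$ are simultaneously Borel--Cantelli-summable for every $k$ in a sequence $\delta_k\downarrow 0$, and then intersect the resulting countably many full-measure events. This is exactly the device used in \cite{BD} for $d=1$; the only change here is that $m_n=O(n^d)$ forces the decay hypothesis $o(1/|z|^d)$ rather than $o(1/|z|)$, which is precisely why the union bound $m_n\,\mathbb P(|a_1|>e^{\delta n}) = o(1)$ still goes through. Everything else — the Cauchy--Schwarz step, $m_n=O(n^d)$, the upper envelope bound $\limsup\frac{1}{2n}\log B_n\le V_K$ — is already available from Section 2.
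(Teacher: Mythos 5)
Your proposal is correct and follows essentially the same route as the paper: a union bound giving $\mathbb P\bigl(\max_{j\le m_n}|a_j|>e^{\epsilon n}\bigr)\le m_n\, o(1/n^d)=o(1)$, extraction of a subsequence along which these tail probabilities are summable, Borel--Cantelli, and a diagonalization over $\epsilon_k\downarrow 0$ so that a single $Y_0$ works for every $\epsilon$ (the point you correctly flag as the only delicate one); your Cauchy--Schwarz detour through $B_n$ lands on the same bound $\sum_j|p_j(z)|\le m_n e^{nV_K(z)}$ that the paper gets directly from $\|p_j\|_K=1$. One small slip in your extraction criterion: since the probabilities are only $o(1)$ and need never fall below $2^{-n}$, you should enumerate the subsequence as $n_1<n_2<\cdots$ and demand the probability at $n_s$ be $<2^{-s}$ (summable in the position index $s$), not $<2^{-n}$; with that cosmetic fix the argument is complete.
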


\begin{proof} There are only two modifications to the proof of Lemma 3.1 in \cite{BD}. First, the condition $\mathbb P(|a_{nj}|>e^{|z|})=o(1/|z|^d)$ yields that for $\epsilon >0$ we have $\mathbb P (|a_{nj}| > e^{\epsilon |z|} )=o(1/|z|^d)$ and hence letting
$$\Omega_{n,\epsilon}:=\{\omega \in \Omega: |a_{nj}(\omega)| \leq e^{\epsilon n}, \ j=1,...,m_n\}$$
we have
$$\mathbb P(\Omega^c_{n,\epsilon})\leq \sum_{j=1}^{m_n} \mathbb P (|a_{nj}(\omega)| > e^{\epsilon n})\leq m_n o(1/n^d)\to 0$$
as $n\to \infty$ since $m_n=0(n^d)$. Thus for any sequence $Y\subset \N$ there is a further subsequence $Y_0=\{n_s\}_s \subset Y$ with 
$\sum_{s=1}^{\infty} \mathbb P(\Omega^c_{n_s,\epsilon})<\infty$. By Borel-Cantelli, almost surely there exists $s_0(\epsilon)$ so that for $s\geq s_0(\epsilon)$
$$|a_{n_sj}(\omega)|\leq e^{\epsilon n_s}, \ j=1,2,...,m_{n_s}.$$
Next, for $j=1,...,m_n$, since $||p_{nj}||_K\leq 1$ we have  
$$|p_{nj}(z)|\leq e^{deg(p_{nj})V_K(z)} \leq e^{nV_K(z)} \ \hbox{so that} \ \frac{1}{n}\log|p_{nj}(z)|\leq V_K(z), \ z\in \C^d;$$
and hence using this (in place of local uniform convergence of $\{\frac{1}{2n}\log |B_n|\}$, which we don't necessarily have) yields almost surely 
$$\frac{1}{n_s}\log |H_{n_s}(z)|\leq \frac{1}{n_s}\log \bigl(\sum_{j=1}^{m_{n_s}} |a_{n_sj} p_{n_sj}(z)|\bigr)\leq \epsilon +V_K(z), \ z \in \C^d$$
for $s \geq s_0(\epsilon)$. Taking a sequence $\{\epsilon_k\}$ with $\epsilon_k \searrow 0$, repeating this argument, for each $k=1,2,...$ we can find a subsequence $Y_k=\{n_{s,k}\}_s$ with $Y_k \subset Y_{k-1} $ so that almost surely  
$$\frac{1}{n_{s,k}}\log |H_{n_{s,k}}(z)| \leq \frac{1}{n_{s,k}}\log \bigl(\sum_{j=1}^{m_{n_{s,k}}} |a_{n_{s,k}j} p_{n_{s.k}j}(z)|\bigr)\leq  \epsilon_k +V_K(z), \ z \in \C^d$$
for $s \geq s_k(\epsilon_k)$. Taking a diagonal-like subsequence $Y'=\{n(k)\}_k \subset Y$ where 
$$n(k)=n_{s,k} \ \hbox{with} \ s \geq s_k(\epsilon_k) \ \hbox{and} \  \lim_{k\to \infty} n(k)= \infty$$ 
we have 
$$ \frac{1}{n(k)}\log |H_{n(k)}(z)| \leq \frac{1}{n(k)}\log \bigl(\sum_{j=1}^{m_{n(k)}} |a_{n(k)j} p_{n(k)j}(z)|\bigr)\leq  \epsilon_k +V_K(z), \ z \in \C^d$$
so that almost surely
$$\limsup_{k \to \infty} \frac{1}{n(k)}\log |H_{n(k)}(z)| \leq V_K(z), \ z \in \C^d.$$
This yields the local uniform boundedness of the family $\{\frac{1}{n(k)}\log |H_{n(k)}|\}$ as well as (\ref{3.1c}).

\end{proof}

We state without proof a multidimensional version of Theorem 4.1 \cite{BD}; this is a probabilistic version of Proposition \ref{keyprop} in our setting.

\begin{proposition} \label{keypropB} Let $\{H_n(z,\omega)\}$ be a sequence of random polynomials of the form (\ref{probhb}) such that, almost surely, 
\begin{enumerate}
\item $\{H_n(z,\omega)\}$ are locally uniformly bounded above on $\C^d$;  
\item $[\limsup_{n\to \infty}\frac{1}{n}\log |H_n(z,\omega)|]^*\leq V_K(z), \ z \in \C^d$; and 
\item there is a countable dense set of points $\{w_r\}$ in $\C^d$ such that 
$$\lim_{n\to \infty}\frac{1}{n}\log |H_n(w_r,\omega)|=V_K(w_r), \ r=1,2,...$$ 
\end{enumerate}
Then almost surely $\frac{1}{n}\log |H_n(z,\omega)|\to V_K$ in $L^1_{loc}(\C^d)$.

\end{proposition}

Here, as in Theorem \ref{convprob}, we are assuming $K$ is regular so $V_K$ is continuous. We use this to prove Theorem \ref{convprob}.

\begin{proof} To show that
$$\frac{1}{n}\log |H_n|\to V_K \ \hbox{in} \ L^1_{loc}(\C^d)$$ 
in probability, we must show that for any sequence $Y\subset \N$ there is a further subsequence $Y_1\subset Y$ such that  
$$\lim_{n\to \infty, \ n\in Y_1} \frac{1}{n}\log |H_n|= V_K \ \hbox{in} \ L^1_{loc}(\C^d)$$
almost surely. From Lemma \ref{3.1}, we know that for any sequence $Y\subset \N$ there is a further subsequence $Y_0\subset Y$ such that almost surely, the family $\{\frac{1}{n}\log |H_n|\}_{n\in Y_0}$ is locally uniformly bounded above and for all $z\in \C^d$ 
$$\limsup_{n\to \infty, \ n\in Y_0} \frac{1}{n}\log |H_n(z)| \leq V_K(z).$$

We use our hypothesis on $\{B_n\}$ to find a subsequence $Y_1\subset Y_0$ and a countable dense set of points $\{w_r\}$ in $\C^d$ such that 
\begin{equation} \label{bncon} \lim_{n\to \infty, \ n\in Y_1} \frac{1}{2n}\log B_n(w_r)=V_K(w_r), \ r=1,2,...\end{equation}
Following the reasoning in Theorem 4.2 of \cite{BD}, using Proposition (\ref{keypropB}) and a Cantor diagonalization procedure, it suffices, to finish the proof, to show that for each $w_r$, 
$$\lim_{n\to \infty} \frac{1}{n}\log |H_n(w_r)| = V_K(w_r) \ \hbox{in probability}.$$
To achieve this, it suffices to show that for each $w_r$, which we simply write as $w$, we have, for every $\epsilon >0$,
\begin{equation} \label{inprob1}
 \lim_{n\in Y_1, \ n\to \infty} \mathbb P\bigl( \frac{1}{n}\log |H_n(w)| > V_K(w)+\epsilon \bigr) =0
\end{equation}
and
\begin{equation} \label{inprob2}
 \lim_{n\in Y_1, \ n\to \infty} \mathbb P\bigl( \frac{1}{n}\log |H_n(w)| < V_K(w)-\epsilon \bigr) =0.
\end{equation}

The proof of (\ref{inprob1}) only uses the hypothesis $\mathbb P(|a_{nj}|>e^{|z|})=o(1/|z|^d)$. This condition implies that for $\epsilon >0$ we have $\mathbb P (|a_{nj}| > e^{\epsilon |z|} )=o(1/|z|^d)$ and hence letting
$$\Omega_{n,\epsilon}:=\{\omega \in \Omega: |a_{nj}(\omega)| \leq e^{\epsilon n/3}, \ j=1,...,m_n\}$$
we have
$$\mathbb P(\Omega^c_{n,\epsilon})\leq \sum_{j=1}^{m_n} \mathbb P (|a_{nj}(\omega)| > e^{\epsilon n/3})\leq m_n o(1/n^d)\to 0$$
as $n\to \infty$ since $m_n=0(n^d)$. Since
$$\frac1n\log|H_n(w)|=\frac{1}{n}\log |\sum_{j=1}^{m_n} a_{nj} p_{nj}(z)| \leq \frac{1}{2n}\log \sum_{j=1}^{m_n} |a_{nj}|^2+\frac{1}{2n}\log B_n(w),$$
for $\omega \in \Omega_{n,\epsilon}$ we have 
$$\frac{1}{2n}\log \sum_{j=1}^{m_n} |a_{nj}|^2 \leq \frac{1}{2n}\log (m_ne^{2\epsilon n/3})=  \frac{1}{2n}\log m_n + \epsilon/3 < \epsilon/2$$
for $n$ sufficiently large. By (\ref{bncon}) for $n\in Y_1$ sufficiently large we have
$$\frac{1}{2n}\log B_n(w) <V_K(w) +\epsilon/2;$$
these last two estimates yielding (\ref{inprob1}).

We next prove (\ref{inprob2}). Fom the hypothesis that $\lim_{n \in Y_1, \ n\to \infty} \frac{1}{2n}\log B_n(w)=V_K(w)$, given $\eta >0$, there exists $n_0=n_0(w,\eta)$ such that for all $n\geq n_0$ for some  
$j_n\in \{1,...,m_n\}$ we have 
\begin{equation}\label{last?} |p_{nj_n}(w)|\geq \frac{1}{\sqrt m_n}e^{n(V_K(w)-\eta)}.\end{equation}
Given $\epsilon >0$, using properties of $\mathcal Q$, we have
$${\mathbb P} (\frac1n\log|H_n(w)|\leq V_K(w)-\epsilon)= {\mathbb P} ( |\sum_{j=1}^{m_n} a_{nj} p_{nj}(w)| \leq e^{n(V_K(w)-\epsilon)})$$
$$\leq \mathcal Q(\sum_{j=1}^{m_n} a_{nj} p_{nj}(w), e^{n(V_K(w)-\epsilon)})\leq \mathcal Q (a_{nj_n} p_{nj_n}(w), e^{n(V_K(w)-\epsilon)})=\mathcal Q(a_{nj_n}, \frac{ e^{n(V_K(w)-\epsilon)}}{|p_{nj_n}(w)|}).$$
Now using (\ref{last?}), for $n\in Y_1$ with $n\geq n_0(w,\eta)$ we have
$$\frac{ e^{n(V_K(w)-\epsilon)}}{|p_{nj_n}(w)|}\leq \sqrt m_n \cdot e^{n(\eta -\epsilon)}.$$
Thus, given $\epsilon >0$, by taking $\eta <\epsilon$, since $m_n=0(n^d)$, we have 
$$\frac{ e^{n(V_K(w)-\epsilon)}}{|p_{nj_n}(w)|}\to 0 \ \hbox{as} \ n\to \infty, \ n\in Y_1.$$
Using our hypothesis that $\mathcal Q(a_{nj};r)\to 0$ as $r\to 0$ yields (\ref{inprob2}). 
\end{proof}

We can eliminate the assumption on the concentration function in Theorem \ref{convprob} if we consider random polynomials of degree at most $n$ of the form (\ref{probh}); i.e., $ H_n(z):=\sum_{j=1}^{m_n} a_jp_j(z)$, where $\{a_j\}$ is a {\it sequence} of i.i.d. complex random variables and $\{p_j\}_{j=1,...,m_n}$ form a basis for $\mathcal P_n$ with $||p_j||_K\leq 1$. Here $K$ is a regular compact set in $\C^d$. This generalizes Theorem 7.6 in \cite{BD}.

\begin{theorem} \label{convprobs} Let $\{B_n\}$ in (\ref{bn}) satisfy the conclusion in Proposition \ref{bnasymp}: given any subsequence $Y\subset \N$, there is a further subsequence $Y_0\subset Y$ and a countable dense set of points $\{w_r\}$ in $\C^d$ such that 
$$\lim_{n\to \infty, \ n\in Y_0} \frac{1}{2n}\log B_n(w_r)=V_K(w_r), \ r=1,2,...$$
Let $\{a_j\}$ satisfy $\mathbb P(|a_j|>e^{|z|})=o(1/|z|^d)$
Then, in probability,
$$\frac{1}{n}\log |H_n|\to V_K \ \hbox{in} \ L^1_{loc}(\C^d).$$
\end{theorem}


\begin{remark} For $\{q_j\}_j$ $Z-$asymptotically Chebyshev for $K$, the corresponding sequence $\{p_j:=q_j/||q_j||_K\}_j$ gives a basis such that $\{B_n\}$ satisfy the conclusion in Proposition \ref{bnasymp}. 
\end{remark}

 The following is a multivariate version of Theorem 5.2 of \cite{BD}; it is a restatement of Theorem 7.5 of \cite{BD}. The key ingredient is a version of the Kolmogorov-Rogozin inequality stated as Theorem 5.1 of \cite{BD}. It is important to observe that this inequality applies to {\it sequences} of random coefficients but not to arrays.

\begin{theorem} \label{KR} Let $\{ b_j \}$ be a sequence of nonzero complex numbers satisfying 
$$\lim_{n\to \infty}\frac{1}{2n}\log \sum_{j=1}^{m_n} |b_j|^2 =: B\geq 0.$$
Let $\{a_j\}$ be i.i.d. complex random variables satisfying $\mathbb P(|a_j|>e^{|z|})=o(1/|z|^d)$. Then 
$$\lim_{n\to \infty}\frac{1}{n}\log \sum_{j=1}^{m_n} |a_j b_j|=B$$
in probability.

\end{theorem}


We now proceed with the proof of Theorem \ref{convprobs}.

\begin{proof} To show that
$$\frac{1}{n}\log |H_n|\to V_K \ \hbox{in} \ L^1_{loc}(\C^d)$$ 
in probability, we must show that for any sequence $Y\subset \N$ there is a further subsequence $Y_2\subset Y$ such that  
$$\lim_{n\to \infty, \ n\in Y_2} \frac{1}{n}\log |H_n|= V_K \ \hbox{in} \ L^1_{loc}(\C^d)$$
almost surely. From Lemma \ref{3.1}, for any sequence $Y\subset \N$ there is a further subsequence $Y_0\subset Y$ such that almost surely, the family $\{\frac{1}{n}\log |H_n|\}_{n\in Y_0}$ is locally uniformly bounded above and for all $z\in \C^d$ 
$$\limsup_{n\to \infty, \ n\in Y_0} \frac{1}{n}\log |H_n(z)| \leq V_K(z).$$
Next, using the hypothesis that the conclusion of Proposition \ref{bnasymp} holds, there is a further subsequence $Y_1\subset Y$ and a countable dense set of points $\{w_r\}$ in $\C^d$ such that 
$$\lim_{n\to \infty, \ n\in Y_1} \frac{1}{2n}\log B_n(w_r)=V_K(w_r), \ r=1,2,...$$
Thus the hypotheses of Theorem \ref{KR} hold with $b_j=p_j(w_r)$ and $B=V_K(w_r)$; note we may assume $\{w_r\}$ are chosen so that $p_j(w_r)\not = 0$ for all $j, r$. Hence for each $w_r$ we have 
$$\lim_{n\to \infty, \ n\in Y_1} \frac{1}{n}\log |H_n(w_r)| = V_K(w_r)$$
in probability. By passing to a further subsequence $Y_2\subset Y_1$, for each $w_r$ we have 
$$\lim_{n\to \infty, \ n\in Y_2} \frac{1}{n}\log |H_n(w_r)| = V_K(w_r)$$
almost surely. 

Then by Proposition \ref{keypropB} applied to the sequence $\{H_n\}_{n\in Y_2}$, almost surely 
$$\lim_{n\to \infty, \ n\in Y_2} \frac{1}{n}\log |H_n| = V_K \ \hbox{in}  \ L^1_{loc}(\C^d).$$

\end{proof}

\section{Almost sure convergence}

In this section, we focus on almost sure convergence. Given a basis $\{p_{nj}\}_{j=1,...,m_n}$ for $\mathcal P_n$, we form random polynomials 
$$H_n(z):=\sum_{j=1}^{m_n} a_{nj}p_{nj}(z)$$
as in (\ref{probhb}) where $\{a_{nj}\}$ form an array of i.i.d. random variables, and we define $B_n(z):=\sum_{j=1}^{m_n}|p_{nj}(z)|^2$. It may occur that $a_{nj}=a_j$ is independent of $n$ and $H_n$ are as in (\ref{probh}). The main results of this section are Theorem \ref{codim1} and its weighted analogue, Theorem \ref{codim1w}.

\begin{theorem}\label{codim1}
Let $\{a_{nj}\}$ be an array of i.i.d. random variables. We assume that
\begin{itemize}
\item[(i)] $\Bbb{E}[\big(\log(1+|a_{nj}|\big)^d]<\infty$.
\item[(ii)] For some $\gamma >0$, we have $\mathcal Q(a_{nj},r)\leq r^{\gamma}$ for $r\leq r_0$.
\end{itemize}  
Let $K\subset \C^d$ be compact and regular. Suppose that that $||p_{nj}||_K\leq 1, \ j=1,...,m_n; n=1,2,...$ and for a countable dense set of points $\{w_j\}$ in $\C^d$
$$\lim_{n\to \infty} \frac{1}{2n}\log B_n(w_j)=V_K(w_j) \ \hbox{for all} \ j=1,2,... $$
Then for random polynomials $H_n(z):=\sum_{j=1}^{m_n} a_{nj}p_{nj}(z)$, almost surely,
$$\frac{1}{n}\log |H_n|\to V_K \ \hbox{in} \ L^1_{loc}(\C^d)$$
and hence $\frac{1}{n}dd^c \log |H_n| \to dd^cV_{K}$ almost surely in the sense of currents.
\end{theorem}

\noindent Here bases $\{p_{nj}\}$ constructed from asymptotically (BM) measures as in Proposition \ref{locunifb} provide an example where we have the much stronger property that $\frac{1}{2n}\log B_n\to V_K$ locally uniformly on $\C^d$. 

We state the weighted analogue of Theorem \ref{codim1}.

\begin{theorem}\label{codim1w}
Let $\{a_{nj}\}$ be an array of i.i.d. random variables. We assume that
\begin{itemize}
\item[(i)] $\Bbb{E}[\big(\log(1+|a_{nj}|\big)^d]<\infty$.
\item[(ii)] For some $\gamma >0$, we have $\mathcal Q(a_{nj},r)\leq r^{\gamma}$ for $r\leq r_0$.
\end{itemize}  
Let $K\subset \C^d$ be compact and locally regular and let $Q$ be a continuous, real-valued function on $K$. Suppose that that $||p_{nj}e^{-nQ}||_K\leq 1, \ j=1,...,m_n; n=1,2,...$ and for a countable dense set of points $\{w_j\}$ in $\C^d$
$$\lim_{n\to \infty} \frac{1}{2n}\log B_n(w_j)=V_{K,Q}(w_j) \ \hbox{for all} \ j=1,2,... $$
Then for random polynomials $H_n(z):=\sum_{j=1}^{m_n} a_{nj}p_{nj}(z)$, almost surely,
$$\frac{1}{n}\log |H_n|\to V_{K,Q} \ \hbox{in} \ L^1_{loc}(\C^d)$$
and hence $\frac{1}{n}dd^c \log |H_n| \to dd^cV_{K,Q}$ almost surely in the sense of currents.
\end{theorem}

\noindent Here bases $\{p_{nj}\}$ constructed from asymptotically weighted (BM) measures as in Proposition \ref{locunif} provide an example.

We will use the following lemma (cf., the proof of \cite{BD}, Lemma 3.2).  

\begin{lemma}\label{lem1}
Let $\{a_j\}$ be a sequence of i.i.d. random variables on a probability space $(\Omega, \mathcal F,\Bbb{P})$ which satisfy $\Bbb{E}[\big(\log(1+|a_j|\big)^d]<\infty$. For each $\epsilon>0$ there exists a random constant $C=C(\omega)$ such that almost surely
$$|a_j|\leq C\exp(\sqrt[d]{j \epsilon }) \ \hbox{for all} \ j.$$ 
\end{lemma}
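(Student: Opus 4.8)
The plan is to prove Lemma \ref{lem1} by a straightforward Borel--Cantelli argument applied to the tail events controlled by the finite moment hypothesis $\Bbb{E}[(\log(1+|a_j|))^d]<\infty$. The point is that the $d$-th power in the moment condition is exactly matched to the growth rate $\exp(\sqrt[d]{j\epsilon})$: taking logarithms, the claimed bound $|a_j|\le C\exp(\sqrt[d]{j\epsilon})$ is equivalent to $(\log^+|a_j|)^d \le (\log C + \sqrt[d]{j\epsilon})^d$, and the dominant term on the right is $j\epsilon$.

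First I would fix $\epsilon>0$ and consider the events $E_j:=\{\omega: \log(1+|a_j(\omega)|) > \sqrt[d]{j\epsilon}\}$, equivalently $E_j=\{(\log(1+|a_j|))^d > j\epsilon\}$. Since the $a_j$ are identically distributed, letting $X:=(\log(1+|a_1|))^d$ we have $\Bbb{P}(E_j)=\Bbb{P}(X>j\epsilon)$, and the hypothesis says $\Bbb{E}[X]<\infty$. By the standard comparison of a nonnegative integrable random variable with a series of its tail probabilities, $\sum_{j=1}^\infty \Bbb{P}(X> j\epsilon) = \sum_{j=1}^\infty \Bbb{P}(X/\epsilon > j) \le \Bbb{E}[X/\epsilon] < \infty$. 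Hence $\sum_j \Bbb{P}(E_j)<\infty$, and by the Borel--Cantelli lemma, almost surely only finitely many $E_j$ occur: there is a random index $j_0=j_0(\omega)$ such that for all $j\ge j_0$ we have $\log(1+|a_j(\omega)|)\le \sqrt[d]{j\epsilon}$, i.e. $|a_j(\omega)| \le 1+|a_j(\omega)| \le \exp(\sqrt[d]{j\epsilon})$.

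It then remains to absorb the finitely many exceptional indices $j<j_0$ into the multiplicative constant. Almost surely each $|a_j(\omega)|$ is finite, so $C(\omega):=\max\{1,\ \max_{1\le j< j_0(\omega)} |a_j(\omega)|\exp(-\sqrt[d]{j\epsilon})\}$ is a finite random constant, and with this choice $|a_j(\omega)|\le C(\omega)\exp(\sqrt[d]{j\epsilon})$ holds for every $j\in\Bbb{N}$, as desired.

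I do not anticipate a serious obstacle here; this is a routine first-moment/Borel--Cantelli estimate, and the only mild subtlety worth stating carefully is the matching of exponents (the exponent $d$ in the moment condition is precisely what makes $\sum_j\Bbb{P}((\log^+|a_j|)^d>j\epsilon)$ summable, which is why the bound has $\sqrt[d]{j\epsilon}$ rather than $j\epsilon$). One could also remark that the dependence of $j_0$, and hence of $C$, on $\epsilon$ is harmless for the intended applications, since in the later arguments $\epsilon$ is fixed before $n\to\infty$.
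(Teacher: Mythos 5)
Your proof is correct and follows essentially the same route as the paper: set $X=(\log(1+|a_1|))^d/\epsilon$, use $\sum_j\Bbb{P}(X\geq j)\leq\Bbb{E}[X]<\infty$ together with Borel--Cantelli to control all but finitely many indices, and absorb the exceptional ones into the random constant $C(\omega)$. Your explicit construction of $C(\omega)$ as a maximum over the finitely many exceptional indices just spells out a step the paper leaves implicit.
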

\begin{proof}
For a nonnegative random variable $X$ on $(\Omega, \mathcal F,\Bbb{P})$ we have
$$
\sum_{j=1}^{\infty}\Bbb{P}[X\geq j]\leq \Bbb{E}[X]\leq 1+\sum_{j=1}^{\infty}\Bbb{P}[X\geq j].
$$
Letting $X=\frac{1}{\epsilon}(\log(1+|a_1|))^d$ and using the assumption that $a_j$ are identically distributed, we obtain
$$\sum_{j=1}^{\infty}\Bbb{P}[ |a_j|\geq e^{\sqrt[d]{j\epsilon}}]<\infty.$$ 
 Hence, by the Borel-Cantelli lemma there exists a $j_0=j_0(\epsilon)$ so that 
$$|a_j|<e^{\sqrt[d]{j\epsilon}}  \ \hbox{for all} \ j\geq j_0.$$
Then we can find a a random constant $C$ such that almost surely
$$|a_j|<Ce^{\sqrt[d]{j\epsilon}}  \ \hbox{for all} \ j.$$

\end{proof}

We proceed with the proof of Theorem \ref{codim1}; the proof of Theorem \ref{codim1w} is entirely analogous.

 \begin{proof} 
 
 To show that almost surely $\frac1n\log|H_n| \to V_K $ in $L^1_{loc}(\Bbb{C}^d)$, we will utilize Proposition \ref{keypropB}. By Lemma \ref{lem1} for each $\epsilon>0$ there exists a random constant $C$ such that almost surely $|a_{nj}|\leq C e^{\sqrt[d]{\epsilon m_n}}, \ j=1,...,m_n$ so that
$$\sum_{j=1}^{m_n}|a_{nj}|^2\leq C^2m_ne^{2\sqrt[d]{\epsilon m_n}} \  \hbox{for all} \ n.$$
Now by Cauchy-Schwarz, recalling that $B_n(z)=\sum_{j=1}^{m_n}|p_{nj}(z)|^2$, we have
$$\frac1n\log|H_n(z)|=\frac{1}{n}\log |\sum_{j=1}^{m_n} a_{nj} p_{nj}(z)| \leq \frac{1}{2n}\log \sum_{j=1}^{m_n} |a_{nj}|^2+\frac{1}{2n}\log B_n(z).$$
As in the discussion in subsection 2.1, since $||p_{nj}||_K\leq 1$, by definition of $V_K$, $|p_j(z)|\leq e^{deg(p_j)V_K(z)}$ so that
$$B_n(z)\leq \sum_{j=1}^{m_n} e^{2deg(p_j)V_K(z)}\leq m_ne^{2nV_K(z)}$$
which shows that the sequence $\{\frac{1}{2n}\log B_n\}$ is locally uniformly bounded above. Together with the previous inequalities, this give the 
local uniform boundedness from above of $\{\frac1n\log|H_n|\}$ almost surely. Using $m_n=O(n^d)$ we deduce that 
 there exists a set $\mathcal{A}\subset \mathcal H$ of probability one such that for $\omega \in  \mathcal{A}$ and a sequence $\{H_n(z,\omega)\}$ we have 
$$\limsup_{n\to \infty}\frac1n\log|H_n(z)| \leq \limsup_{n\to \infty}\big(\frac{1}{2n}\log \sum_{j=1}^{m_n}|a_{nj}|^2 +\frac{1}{2n}\log B_n(z)\bigr)
\leq  \epsilon^{1/d}+V_{K}(z)$$
 on $\C^d$. We can repeat the above arguments for a sequence $\{\epsilon_k\}_k$ with $\epsilon_k \searrow 0$; for each $k$  there exists a set $\mathcal{A}_k\subset \mathcal H$ of probability one such that for $\omega \in  \mathcal{A}_k$ and a sequence $\{H_n(z,\omega)\}$ we have 
$$\limsup_{n\to \infty}\frac1n\log|H_n(z)| \leq \limsup_{n\to \infty}\big(\frac{1}{2n}\log \sum_{j=1}^{m_n}|a_{nj}|^2 +\frac{1}{2n}\log B_n(z)\bigr)
\leq  \epsilon_k^{1/d}+V_{K}(z).$$
 Since a countable union of sets of probability 
 zero has probability zero, we conclude that
$(\displaystyle\limsup_{n\to \infty}\frac1n\log|H_n(z)|)^*\in L(\Bbb{C}^d)$ and almost surely 
\begin{equation}\label{limsup}
\limsup_{n\to \infty}\frac1n\log|H_n(z)| \leq V_{K}(z)\ \text{for}\ z\in \Bbb{C}^d.
\end{equation} 

From the hypothesis that $\lim_{n\to \infty} \frac{1}{2n}\log B_n(w_j)=V_K(w_j)$ for each $w_j$, given $w_j$ and $\eta >0$, there exists $n_0=n_0(w_j,\eta)$ such that for all $n\geq n_0$ for some  
$j_n\in \{1,...,m_n\}$ we have 
\begin{equation}\label{last?} |p_{nj_n}(w_j)|\geq \frac{1}{\sqrt m_n}e^{n(V_K(w_j)-\eta)}.\end{equation}
Now given $\epsilon >0$, using properties 1. and 2. of $\mathcal Q$,
$${\mathbb P} (\frac1n\log|H_n(w_j)|\leq V_K(w_j)-\epsilon)= {\mathbb P} ( |\sum_{j=1}^{m_n} a_{nj} p_{nj}(w_j)| \leq e^{n(V_K(w_j)-\epsilon)})$$
$$\leq \mathcal Q(\sum_{j=1}^{m_n} a_{nj} p_{nj}(w_j), e^{n(V_K(w_j)-\epsilon)})\leq \mathcal Q (a_{nj_n} p_{nj_n}(w_j), e^{n(V_K(w_j)-\epsilon)})=\mathcal Q(a_{nj_n}, \frac{ e^{n(V_K(w_j)-\epsilon)}}{|p_{nj_n}(w_j)|}).$$
From the hypothesis, $\mathcal Q(a_{nj_n},r)\leq r^{\gamma}$ and (\ref{last?}) we have
$${\mathbb P} (\frac1n\log|H_n(w_j)|\leq V_K(w_j)-\epsilon)\leq \frac{ e^{\gamma n(V_K(w_j)-\epsilon)}}{|p_{nj_n}(w_j)|^{\gamma}}\leq m_n^{\gamma/2} \cdot e^{(\gamma n(\eta-\epsilon))}.$$
Thus, given $\epsilon >0$, by taking $\eta <\epsilon$, since $m_n=0(n^d)$, we have 
$$\sum_{n=1}^{\infty} {\mathbb P} (\frac1n\log|H_n(w_j)|\leq V_K(w_j)-\epsilon) <\infty.$$
By Borel-Cantelli there exists $N=N(w_j,\epsilon)$ such that, with probability one, $\frac1n\log|H_n(w_j)|> V_K(w_j)-\epsilon$ for $n>N$. Repeating this argument for a sequence $\{\epsilon_k\}$ with $\epsilon_k \searrow 0$ we conclude that almost surely we have
$$\liminf_{n\to \infty}\frac1n\log|H_n(w_j)|\geq V_{K}(w_j)$$
for this fixed $w_j$. Since a countable union of sets of probability zero is still of probability zero, applying this argument to our countable dense set of points $\{w_j\}$ in $\C^d$ we have 
$$\liminf_{n\to \infty}\frac1n\log|H_n(w_j)|\geq V_{K}(w_j), \ j=1,2,...$$
almost surely. Together with (\ref{limsup}) we conclude that almost surely
$$\lim_{n\to \infty}\frac1n\log|H_n(w_j)|=V_{K}(w_j), \ j=1,2,...$$
Proposition \ref{keypropB} gives that almost surely $\frac1n\log|H_n| \to V_K $ in $L^1_{loc}(\Bbb{C}^d)$.

\end{proof}

We make some remarks on Theorem \ref{codim1}.

\begin{enumerate}
\item Suppose $\{p_{j}=q_j/||q_j||_K\}$ where $\{q_j\}$ are $Z-$asymptotically Chebyshev for a regular compact set $K$. If we assume a stronger conclusion than  that in Proposition \ref{bnasymp}, namely, if 
$$\lim_{n\to \infty} \frac{1}{2n}\log B_n(w_r)=V_K(w_r), \ r=1,2,...$$
on a countable dense set of points for the {\it full} sequence $\{\frac{1}{2n}\log B_n\}$, then this basis satisfies the hypotheses in Theorem \ref{codim1}.

\item We conjecture that for $K$ regular and $H_n$ as in (\ref{probh}), the single hypothesis (i), $\Bbb{E}[\big(\log(1+|a_j|\big)^d]<\infty$, is sufficient for the conclusion that almost surely in $\mathcal{H}$ we have $\frac{1}{n}dd^c \log |H_n| \to dd^cV_{K} \ \text{as}\ n\to \infty$. For $d=2$ and the case of the bivariate Kac ensemble, this was proved in \cite{BD}, Theorem 7.7.

\item A strong converse to Theorem \ref{codim1} for $H_n$ as in (\ref{probh}) in $\C$ was given by Dauvergne \cite{DD}: for univariate random polynomials $H_n(z)=\sum_{j=0}^n a_j p_j(z)$ where $\{p_j\}$ are asymptotically minimal for $K\subset \C$ (see Remark \ref{2.3}) and $\{a_j\}$ are nondegenerate i.i.d. random variables, if $\Bbb{E}\big(\log(1+|a_j|\big)=\infty$, then the sequence $\mu_{H_n}$ (see the introduction) has no almost sure limit in the space of probability measures on $\C$. In $\C^d$ for $d\geq 3$ and $k=1$, some special cases of a converse to Theorem \ref{codim1} are known; see \cite{Bay1}.

\end{enumerate}



\vspace{1cm}
{\obeylines
\texttt{T. Bayraktar, tbayraktar@sabanciuniv.edu 
Sabanci University, Istanbul, Turkey
\medskip
T. Bloom, bloom@math.toronto.edu 
University of Toronto, Toronto, Ontario M5S 2E4 Canada
\medskip
N. Levenberg, nlevenbe@indiana.edu
Indiana University, Bloomington, IN 47405 USA }
}

\end{document}